\newcommand{\ba}{\begin{array}}
\newcommand{\ea}{\end{array}}
\newcommand{\bi}{\begin{itemize}}
\newcommand{\ei}{\end{itemize}}
\newcommand{\bc}{\begin{center}}
\newcommand{\ec}{\end{center}}
\newcommand{\bfr}{\begin{flushright}}
\newcommand{\efr}{\end{flushright}}
\newtheorem{theorem}{Theorem}[section]
\newtheorem{lemma}{Lemma}[section]
\newtheorem{corollary}{Corollary}[section]
\journal{}
\begin{document}
\begin{frontmatter}
  \title{On a convexity problem}
  \author{Bogdan Gavrea}
  \address{Department of Mathematics, Technical University of Cluj-Napoca,
          Str. Memorandumului nr. 28, 400114, Cluj-Napoca, Romania}
  \begin{abstract}
     This work is a continuation of what was done in \cite{BGavrea2018} and it is 
   strongly connected to the work done in \cite{AbelRasa2018}.
  \end{abstract}
  \begin{keyword}
   linear positive operators, convex functions, Bernstein operators, Mastroianni operators.
    \MSC 26D15 \sep 26D10 \sep 46N30.
  \end{keyword}
\end{frontmatter}

\section{Introduction}\label{sec:intro}
Let $n\in \mathbb{N}$ and let $\Pi_n$ denote the set of all polynomials of degree $\le n $. The fundamental 
Bernstein polynomials of degree $n$ are given by: 
$$
    b_{n,k}(x) = {n \choose k } x^k (1-x)^{n-k}, \; k=0,1,...,n. 
$$
In (\cite{Ref1}, Problem $2$, pp. $164$), I. Ra\c{s}a (\cite{Ref1}, Problem $2$, pp. $164$), came up with the following problem: \emph{Prove or disprove the following inequality:}
\begin{equation} \label{eq:1_1}
  \sum_{i=0}^n \sum_{j=0}^n \left[ b_{n,i}(x) b_{n,j}(x)+ b_{n,i}(y) b_{n,j}(y)- 2  b_{n,i}(x) b_{n,j}(y)
		\right] f\left(\frac{i+j}{2n}\right) \ge 0,
\end{equation}
for any convex function $f\in C[0,1]$ and any $x,y \in [0,1]$. 
In \cite{Ref7}, by using a probabilistic approach, J. Mrowiec, T. Rajba  and S. W\k{a}sowicz,  
 gave a positive answer to the above 
problem and proved the following generalization of inequality (\ref{eq:1_1}).
\begin{theorem}[\cite{Ref7}, Theorem 12]\label{th:Miro}
   Let $m,n\in \mathbb{N}$ with $m\ge 2$. Then, 
   \begin{eqnarray}
	\sum_{i_1,...,i_m=0}^n 
	\left[ b_{n,i_1}(x_1)...b_{n,i_m}(x_1)+...+ b_{n,i_1}(x_m)...b_{n,i_m}(x_m)
	\right. \nonumber\\
	\left.  -m b_{n,i_1}(x_1)...b_{n,i_m}(x_m) 
	\right]
	f\left( \frac{i_1+...+i_m}{mn} \right) \ge 0,
	\label{eq:1_2}
   \end{eqnarray}
for any convex function $f\in C[0,1]$ and any $x_1,...,x_m \in [0,1]$. 
\end{theorem}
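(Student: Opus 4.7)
The plan is to translate (\ref{eq:1_2}) into a stochastic convex-order statement and to prove that ordering by a coupling argument. For $p \in [0,1]$ and $N \in \mathbb{N}$, let $B_N(p)$ denote a $\mathrm{Binomial}(N,p)$ random variable. Take independent $U_1,\ldots,U_m$ with $U_j \sim B_n(x_j)$ and put $S = (U_1+\cdots+U_m)/(mn)$; for each $k \in \{1,\ldots,m\}$ let $V_k \sim B_{mn}(x_k)$ with $R_k = V_k/(mn)$, and let $K$ be independent and uniform on $\{1,\ldots,m\}$, with $R = R_K$. Using that a sum of $m$ independent copies of $B_n(x_k)$ is a single $B_{mn}(x_k)$, I identify
$$\sum_{i_1,\ldots,i_m=0}^{n}\Bigl(\prod_{j=1}^{m}b_{n,i_j}(x_k)\Bigr) f\!\left(\frac{i_1+\cdots+i_m}{mn}\right) = E[f(R_k)]$$
and
$$\sum_{i_1,\ldots,i_m=0}^{n}\Bigl(\prod_{j=1}^{m}b_{n,i_j}(x_j)\Bigr) f\!\left(\frac{i_1+\cdots+i_m}{mn}\right) = E[f(S)].$$
Hence (\ref{eq:1_2}) is equivalent to the single inequality $E[f(R)] \ge E[f(S)]$ for every convex $f \in C[0,1]$.

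A direct computation gives $E[S] = E[R] = (x_1+\cdots+x_m)/m$, so the desired inequality is precisely the convex stochastic ordering $S \le_{\mathrm{cx}} R$. By the Strassen/stop-loss characterisation, this is equivalent to
$$E[(S-c)_+] \le E[(R-c)_+] \qquad \text{for every } c \in \mathbb{R},$$
and, because every continuous convex function on $[0,1]$ is a positive combination of affine functions and such plus-cutoffs, verifying this stop-loss family of inequalities suffices. I would attempt to produce it by exhibiting a coupling that realises $R$ as a mean-preserving spread of $S$: on a common probability space, construct a sub-$\sigma$-algebra $\mathcal{F}$ so that $S = E[R \,|\, \mathcal{F}]$. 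Intuitively $R$ corresponds to first drawing a random label $K$ and then sampling $mn$ Bernoullis with the single parameter $x_K$, whereas $S$ uses each $x_j$ exactly $n$ times; the coupling should therefore be built by a symmetrisation/relabelling of the Bernoulli trials that averages the random choice of $K$ back to the deterministic list $(x_1,\ldots,x_m)$.

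The principal difficulty is precisely the verification of the convex-order step. It must exploit the particular structure of the binomial family (its log-concavity and the closure of the binomial class under independent convolution), and a naive coupling will not match the marginals; some care is required to build $\mathcal{F}$ so that the conditional averaging is exact. A reasonable alternative route is induction on $m$, combining the base case $m=2$ (the original Ra\c{s}a inequality, already known) with a reduction that fuses two coordinates $x_i,x_j$ into their common mean $(x_i+x_j)/2$; checking that this merging step respects $\le_{\mathrm{cx}}$ rather than only preserving expectations is the subtle point, and is the place where the full machinery of the Shaked--Shanthikumar theory of stochastic orderings is likely to be needed.
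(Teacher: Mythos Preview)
The paper does not give its own proof of this theorem: it is quoted verbatim from \cite{Ref7} as background, and the authors explicitly say the proof there is probabilistic. What the present paper does instead is build an \emph{analytic} framework (generating functions $g_n$, divided differences, the functionals $C_m$ and $\mathbb{B}_m$) under which the Bernstein case falls as a special instance of Corollary~\ref{cor:2_4} (or of Theorem~\ref{th:main2} combined with the Gusi\'c representation~(\ref{eq:31})). So the comparison you are implicitly being asked for is between your probabilistic route and the generating-function route of the paper; these are genuinely different, and your outline is much closer in spirit to \cite{Ref7} and \cite{KomiRajba} than to anything in the present paper.

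That said, your proposal is only a setup, not a proof. You correctly rewrite (\ref{eq:1_2}) as $E[f(R)]\ge E[f(S)]$ with $R=R_K$ a mixture of $B_{mn}(x_k)/(mn)$ and $S=(U_1+\cdots+U_m)/(mn)$, and you correctly reduce to the convex-order claim $S\le_{\mathrm{cx}}R$. But the heart of the matter is precisely establishing that order, and here you stop. The martingale coupling you sketch (``find $\mathcal{F}$ with $S=E[R\mid\mathcal{F}]$'') does not come out of the relabelling heuristic you describe: if you condition your construction on the uniform variables driving the Bernoulli trials and average over $K$, you get $\frac{1}{mn}\sum_{i}\frac{1}{m}\sum_{k}\mathbf{1}_{\{\xi_i\le x_k\}}$, which is not distributed as $S$. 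Something sharper is needed---in \cite{Ref7} this is done via Ohlin-type sign-change criteria, and \cite{KomiRajba} uses a Muirhead-type convex-order inequality; neither is a one-line coupling. Your alternative induction idea (merge $x_i,x_j$ into $(x_i+x_j)/2$) does not reduce $m$, so it is unclear how it would drive an induction; and the step ``merging respects $\le_{\mathrm{cx}}$'' is essentially as hard as the original claim. In short: the translation to stochastic orders is fine and matches the literature, but the proposal leaves the decisive inequality unproved, and both completion strategies you mention need substantial additional argument that you have not supplied.
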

An elementary proof of (\ref{eq:1_1}), was given recently by Abel
in \cite{Ref2}, where it is  shown that a type (\ref{eq:1_1})  inequality holds also 
for the Mirakyan-Favard-Sz\'{a}sz (\cite{Ref2}, Theorem 5)  and for the Baskakov operators (\cite{Ref2}, 
Theorem 6). 

In \cite{BGavrea2018}, we proved a  type (\ref{eq:1_1})  inequality for a large class of operators defined
in the following way. Let $I$ be one of the intervals $[0,\infty)$ or $[0,1]$. Let $g_n:I\times D \to \mathbb{C}$, 
$ D= \{ z\in  \mathbb{C} \;|\; |z|\le R\}$, $R> 1$
be a function with the property that for any fixed $x\in I$, the function $g_n(x,\cdot)$ is an analytic function 
on $D$,  
\begin{eqnarray}\label{eq:gn}
g_n(x,z) & =  &\sum_{k=0}^\infty a_{n,k}(x) z^k\nonumber\\
a_{n,k}(x)& \ge &  0, \forall k\ge 0 \label{eq:gn}\\
g_n(x,1) & = & 1, \forall x \in I. \nonumber
\end{eqnarray}
In what follows, let $I=[0,\infty)$. The case $I=[0,1]$ follows in the same way.
Let $\mathcal{F}$ be a linear set of functions defined on the interval $I$ and let
$\{A_t\}_{t\in I}$ 
be a set of real linear 
positive functionals defined on $\mathcal{F}$ with the property that for any $f\in \mathcal{F}$, the series 
\begin{equation}\label{eq:2_3}
    L_{n,A}(f)(x) := \sum_{k=0}^\infty a_{n,k}(x) A_{\frac{k}{n}}(f).
\end{equation}
is convergent for any $x\in I$. The identity (\ref{eq:2_3}) defines a \emph{positive linear operator}. The function $g_n$ will be referred to as the \emph{generating function} for the operator $L_{n,A}$ relative to 
the set of functionals $\{A_t\}_{t\in I}$.

In what follows , we assume that the linear positive functionals $\{A_t\}_{t\in I}$ are such that 
$L_{n,A}$ is well defined for any $f\in \mathcal{F}$ and any $x\in I$, the set of all real polynomials 
$\Pi\subseteq \mathcal{F}$ and every functional $A_t$ has the following properties:
\begin{enumerate}
   \item[i)] $A_t(e_0) = 1, t\in I$
   \item[ii)] $A_t(e_1) = at +b,  t\in I$, where $a$ and $b$ are two real numbers independent of $t$
	       and $e_i(x) = x^i$, $x\in I$, $i\in \mathbb{N}$.
\end{enumerate}
In \cite{BGavrea2018}, we obtained the following result: {\it if 
\begin{equation}\label{eq:ineq_divdiff}
		\left[ \frac{k}{n}, \frac{k+1}{n}, \frac{k+2}{n}; A_t(f)\right] \ge 0
	\end{equation}
	and 
	\begin{equation}\label{eq:ineq_derivs}
		\frac{d^k}{dz^k} \left. \left[\frac{g_n(x,z)-g_n(y,z)}{z-1} \right]^2\right|_{z=0} \ge 0,
	\end{equation}
	for any $k\in \mathbb{N}$ 
	and all $x,y\in I$, 
	 then 
	$ A(f) \ge 0$. 
}
Here, for $x,y \in  I $ fixed, the functional $A$ is defined by
\begin{equation*}
	A(f)= \sum_{i=0}^\infty \sum_{j=0}^\infty 
	\left[ a_{n,i}(x) a_{n,j}(x) + a_{n,i}(y) a_{n,j}(y) - 2 a_{n,i}(x) a_{n,j}(y)\right]
	A_{\frac{i+j}{2n}}(f), 
\end{equation*}
The following result (\cite{BGavrea2018}, Corollary 3.2) is useful to verify inequality (\ref{eq:ineq_derivs}), 

\noindent {\it Let $x, y\in I$ be two distinct numbers. Assume that conditions i) and ii) above hold, 
\begin{equation}\label{eq:4_25}
	   \frac{g_n(x,z)- g_n(y,z)}{z-1} = \sum_{k=0}^\infty \beta_{n,k}(x,y) z^k
	\end{equation}
	and $\mbox{sgn}  \;\beta_{n,k}(x,y)$  is the same for all $k\in \mathbb{N}$,
	then (\ref{eq:ineq_derivs}) is satisfied.
}

For  $m\in \mathbb{N}$, $m\ge 2$ and $x\in I^m$, $x= (x_1,...,x_m)$, we define  the functionals:
\begin{eqnarray}
	C_m(f)& = & \sum_{i_1,...,i_m =0}^\infty \left[ a_{n,i_1}(x_1)...a_{n,i_m}(x_1) 
	+...+a_{n,i_1}(x_m)...a_{n,i_m}(x_m)\right. \nonumber\\
		& - & \left.  m a_{n,i_1}(x_1)...a_{n,i_m}(x_m)\right] A_{\frac{i_1+...+i_m}{mn}} (f)
	\label{eq:Cmf}
\end{eqnarray}
In \cite{BGavrea2018}, Theorem 4.1, we have proved the following result:

\noindent{If (\ref{eq:ineq_divdiff}) and (\ref{eq:ineq_derivs}) hold, then 
$$
C_m(f) \ge 0
$$
for any $m\in \mathbb{N}$, $m\ge 2$. }

Applications, such as Bernstein type operators, Mirakyan-Favard-Sz\'{a}sz type operators,  Meyer-K\"{o}nig and Zeller type operators, were considered in \cite{BGavrea2018}. 

Let us assume that that the generating functions $g_n$, $n\in \mathbb{N}^*$ are of the form 
\begin{equation}\label{eq:gnphi}
   g_n(x,z) = \phi^n(x,z),
\end{equation}
where $\phi:I \times D \to \mathbb{C}$ is such that $\phi(x,\cdot)$ is an analytic function and the function $g_n$ given by (\ref{eq:gnphi}) satisfies conditions (\ref{eq:gn}). Under these assumptions, we have 
\begin{equation}\label{eq:anmk}
   \sum_{i_1+...+i_m =k} a_{n,i_1}(x)... a_{n,i_m}(x) = a_{nm,k}(x).
\end{equation}
The above identity implies that 
\begin{equation}\label{eq:CmLmn}
  C_m(f) = \sum_{k=1}^m L_{mn,A}(f) (x_k) - 
	m \sum_{i1,...,i_m=0}^\infty
	a_{i_1}(x_1)...a_{i_m}(x_m) f\left(\ \frac{i_1+...+i_m}{mn}\right)
\end{equation}
Let us assume that the sequence $\left(L_{n,A}\right)_{n\in \mathbb{N}^*}$ preserves convexity. More precisely, 
we assume that for every convex function  $f\in \mathcal{F}$, $L_{n,A}(f)$, $n\in \mathbb{N}^*$ is convex too. Under this assumption, we have 
\begin{equation}\label{eq:ineqLnm}
L_{nm,A}(f)\left(\frac{x_1+...+x_m}{m}\right) \le \sum_{k=1}^m \frac{L_{nm,A}(f)(x_k)}{m}.
\end{equation}
For the Bernstein operators,  in \cite{AbelRasa2018}, the following  problem was studied:

\noindent
{\it Prove that
  \begin{equation}\label{eq:BernsteinEven}
     B_{2n}(f)\left(\frac{x+y}{2}\right) \ge 
     \sum_{i=0}^n \sum_{j=0}^n b_{n,i}(x) b_{n,j}(x) f\left(\frac{i+j}{2n}\right),
  \end{equation}
for all convex $f\in C[0,1]$ and $x,y\in [0,1]$.}

\noindent
A probabilistic solution was found by A. Komisarski and T. Rajba, 
\cite{KomiRajba}. In \cite{AbelRasa2018}, U. Abel and I. Ra\c{s}a gave an analytic proof to the following theorem. 
\begin{theorem}[\cite{AbelRasa2018}, Theorem 1] Let $n,m\in \mathbb{N}$. If $f\in C[0,1]$ is a convex function, then the
inequality 
$$
   B_{mn}(f)\left(\frac{1}{m} \sum_{\nu=1}^m x_\nu \right)
  \ge \sum_{i_1= 0}^n ...\sum_{i_m = 0}^ n
	\left( \prod_{\nu =1}^m b_{n,i_\nu}(x_\nu)\right) 
	f\left( \frac{1}{mn} \sum_{\nu=1}^m i_\nu\right)
$$
is valid for all $x_1,...,x_m\in [0,1]$.
\end{theorem}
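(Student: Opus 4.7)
The plan is to reduce the general-$m$ theorem to its $m=2$ instance, namely inequality~(\ref{eq:BernsteinEven}), by an iterated pairwise averaging argument. I take (\ref{eq:BernsteinEven}) as given, since it was established by Komisarski and Rajba.

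I first reformulate the target. Because the Bernstein operator corresponds to $g_n(x,z)=(1-x+xz)^n=\phi^n(x,z)$, identity~(\ref{eq:anmk}) at the common point $\bar x:=(x_1+\cdots+x_m)/m$ gives
$$
B_{mn}(f)(\bar x)=\sum_{i_1,\ldots,i_m=0}^n\prod_{\nu=1}^m b_{n,i_\nu}(\bar x)\,f\!\left(\frac{i_1+\cdots+i_m}{mn}\right)=:M(\bar x,\ldots,\bar x),
$$
where $M(y_1,\ldots,y_m):=\sum_{i_1,\ldots,i_m=0}^n\prod_\nu b_{n,i_\nu}(y_\nu)\,f\!\left(\frac{i_1+\cdots+i_m}{mn}\right)$ is a polynomial in $(y_1,\ldots,y_m)$. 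The theorem becomes $M(\bar x,\ldots,\bar x)\ge M(x_1,\ldots,x_m)$.

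The core step is a pairwise swap lemma: for any $1\le p<q\le m$ and with $y^\star:=(y_p+y_q)/2$, the substitution $(y_p,y_q)\mapsto(y^\star,y^\star)$ (other coordinates held fixed) cannot decrease $M$. To prove it I freeze the indices $(i_\nu)_{\nu\ne p,q}$, set $C:=\sum_{\nu\ne p,q}i_\nu$, and factor out the nonnegative $\prod_{\nu\ne p,q}b_{n,i_\nu}(y_\nu)$. The remaining inner sum $\sum_{i_p,i_q}b_{n,i_p}(y_p)b_{n,i_q}(y_q)f\bigl(\tfrac{i_p+i_q+C}{mn}\bigr)$ is bounded by its swapped counterpart via (\ref{eq:BernsteinEven}) applied to the auxiliary convex function $h(s):=f\bigl(\tfrac{2s}{m}+\tfrac{C}{mn}\bigr)$, using that $B_{2n}(h)(y^\star)=\sum_{i_p,i_q}b_{n,i_p}(y^\star)b_{n,i_q}(y^\star)h\bigl(\tfrac{i_p+i_q}{2n}\bigr)$ by (\ref{eq:anmk}) and that $h\bigl(\tfrac{i_p+i_q}{2n}\bigr)=f\bigl(\tfrac{i_p+i_q+C}{mn}\bigr)$. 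Reassembling the outer sum gives the claim.

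Finally, I iterate the swap on the currently maximal and minimal pair. The resulting sequence $y^{(0)}=(x_1,\ldots,x_m), y^{(1)}, y^{(2)},\ldots$ preserves the arithmetic mean $\bar x$, has nondecreasing $M$-values, and variance $V(y^{(j)})=\sum_\nu (y^{(j)}_\nu-\bar x)^2$ that contracts by a factor $1-\tfrac{1}{2m}$ at each step (since swapping the extremes reduces $V$ by $\tfrac{1}{2}(y_{\max}-y_{\min})^2\ge V/(2m)$). Hence $y^{(j)}\to(\bar x,\ldots,\bar x)$; continuity of the polynomial $M$ yields $M(y^{(j)})\to B_{mn}(f)(\bar x)$, and the monotonicity chain delivers $M(x_1,\ldots,x_m)\le B_{mn}(f)(\bar x)$. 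The principal obstacle is really the base case (\ref{eq:BernsteinEven}); everything above it is a bookkeeping reduction plus the Lyapunov convergence of the averaging dynamics.
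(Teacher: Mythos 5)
Your argument is correct, but it is not the route taken here: the paper does not prove this theorem at all --- it quotes it from Abel and Ra\c{s}a --- and the machinery the paper itself would bring to bear (Theorem \ref{th:main1}/Theorem \ref{th:main2} together with the Gusi\'c representation (\ref{eq:31})) establishes the inequality by showing that all Taylor coefficients of $\bigl(g_{mn}(\bar x,z)-g_n(x_1,z)\cdots g_n(x_m,z)\bigr)/(z-1)$ are nonnegative, i.e.\ by a purely algebraic positivity statement about the generating functions. What you do instead is a Muirhead-style pairwise-averaging argument: the Vandermonde identity (\ref{eq:anmk}) turns the left-hand side into the diagonal value $M(\bar x,\dots,\bar x)$ of your polynomial $M$; the $m=2$ inequality (\ref{eq:BernsteinEven}) --- in its intended form with $b_{n,i}(x)b_{n,j}(y)$ on the right, the printed version with $b_{n,j}(x)$ being a typo --- supplies the two-coordinate swap monotonicity (your auxiliary function $h(s)=f(2s/m+C/(mn))$ is convex and its argument stays in $[0,1]$ because $C\le(m-2)n$, so the application is legitimate); and the variance-contraction estimate $V(y^{(j+1)})\le\bigl(1-\tfrac{1}{2m}\bigr)V(y^{(j)})$ drives the iterates to the diagonal, where continuity of the polynomial $M$ finishes the job. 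Each of these steps checks out. The trade-off: your proof is conditional on the $m=2$ case, which is exactly the nontrivial content (the paper cites Komisarski--Rajba for it, so the reduction is admissible in context, but you have proved ``$m=2$ implies general $m$'' rather than the theorem from scratch), whereas the generating-function/Gusi\'c route is self-contained and extends to a whole class of operators --- including, with the reversed sign, the Baskakov case --- rather than only to Bernstein polynomials.
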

The purpose of this paper is to give sufficient conditions for the generating functions $g_n$, $n \in \mathbb{N}$, such 
that the functional $\mathbb{B}_m:\mathcal{F}\to \mathbb{R}$, 
\begin{equation}\label{eq:Bmf}
   \mathbb{B}_{m}(f) = L_{mn,A}(f)\left(\frac{x_1+...+x_m}{m}\right)
		- \sum_{i_1,...,i_m=0}^\infty a_{i_1}(x_1)... a_{i_m}(x_m) A_{\frac{i_1+...+i_m}{mn}}(f)
\end{equation}
is nonnegative for any function $f\in \mathcal{F}$ for which
\begin{equation}\label{eq:divdiffAt}
   \left[ \frac{k}{n}, \frac{k+1}{n}, \frac{k+2}{n}; A_t (f) \right] \ge 0
\end{equation}
and for any $x = (x_1,...,x_m)\in I^m$ and any $k\in \mathbb{N}$. It is immediate to see, from (\ref{eq:CmLmn}), that if $\mathbb{B}_m(f)\ge 0$, then $C_m(f)\ge 0$ as well. 

\section{Main results}\label{sec:Main}

\begin{theorem}\label{th:main1}
   Let $f\in \mathcal{F}$ be such that inequality (\ref{eq:divdiffAt}) holds. If 
   \begin{equation}\label{eq:dkdzk}
	\frac{d^k}{dz^k} \left. \left[\frac{g_{nm}\left(\frac{x_1+...+x_m}{m},z\right)-
         g_n(x_1,z)...g_n(x_m,z)}{z-1} \right]\right|_{z=0} \ge 0
   \end{equation}
   for any $k\in \mathbb{N}$ and any $x=(x_1,...,x_m)\in I^m$,then 
   $$
	\mathbb{B}_m(f)\ge 0.
   $$
   If the reverse of (\ref{eq:dkdzk}) holds for any $k\in \mathbb{N}$ and any $x=(x_1,...,x_m)\in I^m$, then 
    $$
	\mathbb{B}_m(f)\le 0.
   $$
\end{theorem}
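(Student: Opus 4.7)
The plan is to reduce $\mathbb{B}_m(f)$ to a single series $\sum_k d_k A_{k/(mn)}(f)$ and then apply Abel summation twice, so that the sign condition (\ref{eq:dkdzk}) on certain Taylor coefficients and the divided-difference convexity (\ref{eq:divdiffAt}) of $t\mapsto A_t(f)$ can be combined. First I would group the multi-index sum in the definition of $\mathbb{B}_m(f)$ by the total index $k=i_1+\cdots+i_m$: by the Cauchy product, $g_n(x_1,z)\cdots g_n(x_m,z)=\sum_k c_k z^k$ with $c_k=\sum_{i_1+\cdots+i_m=k}a_{n,i_1}(x_1)\cdots a_{n,i_m}(x_m)$. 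Combined with the expansion of $L_{mn,A}(f)$ at the point $\overline{x}:=(x_1+\cdots+x_m)/m$ from (\ref{eq:2_3}), this rewrites $\mathbb{B}_m(f)=\sum_{k\ge 0}d_k A_{k/(mn)}(f)$, where $d_k$ is the $k$-th Taylor coefficient of $G(z):=g_{mn}(\overline{x},z)-g_n(x_1,z)\cdots g_n(x_m,z)$.

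Since $g_n(x,1)=1$ gives $G(1)=0$, the quotient $h(z):=G(z)/(z-1)$ is analytic at $z=1$ (and everywhere the original series are); writing $h(z)=\sum_k e_k z^k$, matching coefficients in $(z-1)h(z)=G(z)$ yields $d_k=e_{k-1}-e_k$ with the convention $e_{-1}=0$, and hypothesis (\ref{eq:dkdzk}) is exactly $e_k\ge 0$ for every $k\ge 0$. I would then apply Abel summation twice. The first pass, exploiting the telescoping in $d_k$, gives $\mathbb{B}_m(f)=\sum_{j\ge 0}e_j\bigl[A_{(j+1)/(mn)}(f)-A_{j/(mn)}(f)\bigr]$; introducing the tail sums $E_j:=\sum_{i\ge j}e_i\ge 0$ and applying Abel summation again transforms this into
\begin{equation*}
\mathbb{B}_m(f)=E_0\bigl[A_{1/(mn)}(f)-A_0(f)\bigr]+\sum_{j\ge 1}E_j\bigl[A_{(j+1)/(mn)}(f)-2A_{j/(mn)}(f)+A_{(j-1)/(mn)}(f)\bigr].
\end{equation*}
Each bracket in the series is (up to the positive factor $2/(mn)^2$) the divided difference $[(j-1)/(mn),j/(mn),(j+1)/(mn);A_t(f)]$, which is nonnegative by (\ref{eq:divdiffAt}) read at spacing $1/(mn)$; together with $E_j\ge 0$ this makes the series nonnegative. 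The reverse conclusion follows identically, because the reverse of (\ref{eq:dkdzk}) gives $e_k\le 0$, hence $E_j\le 0$, while the second differences remain $\ge 0$.

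The main delicate point is the boundary contribution $E_0\bigl[A_{1/(mn)}(f)-A_0(f)\bigr]$. Since $E_0=h(1)=G'(1)=\partial_z g_{mn}(\overline{x},1)-\sum_\nu\partial_z g_n(x_\nu,1)$, this term vanishes precisely when the averaging identity $m\,\partial_z g_{mn}(\overline{x},1)=m\sum_\nu\partial_z g_n(x_\nu,1)/1$ holds; in the setting $g_n=\phi^n$ with $\phi_z(\cdot,1)$ an affine function of $x$ this is automatic, which covers the Bernstein, Mirakyan--Favard--Sz\'{a}sz, Baskakov and Meyer--K\"{o}nig--Zeller generating families treated in the paper. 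A secondary point is the justification of the Abel rearrangements as the upper index tends to infinity, which follows from the absolute convergence of the series defining $L_{n,A}(f)$ and $L_{mn,A}(f)$ guaranteed by the standing hypothesis that these operators are well defined on $\mathcal{F}$.
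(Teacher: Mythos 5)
Your argument is, in substance, the paper's own proof. The paper also collapses $\mathbb{B}_m(f)$ to $\sum_k d_k A_{k/(mn)}(f)$ with $d_k$ the Taylor coefficients of $E_m(x,z)=g_{mn}\left(\frac{x_1+\cdots+x_m}{m},z\right)-g_n(x_1,z)\cdots g_n(x_m,z)$, and then rewrites it as $\frac{2}{nm}\sum_{k\ge 2}\mathbb{B}_m\left(\left|\cdot-\frac{k-1}{nm}\right|\right)\left[\frac{k-2}{mn},\frac{k-1}{mn},\frac{k}{mn};A_t(f)\right]$, the weights being identified in (\ref{eq:BmAbsValue}) as Taylor coefficients of $E_m(x,z)/(z-1)^2$ (the exponent $2$ on $E_m$ there is a slip inherited from the quadratic functional of \cite{BGavrea2018}). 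Your double Abel summation yields exactly this second-difference representation: since the $j$-th coefficient of $E_m(x,z)/(z-1)^2$ equals $-\sum_{i\le j}e_i=E_{j+1}-E_0$ (here $E_j$ are your tail sums, not the paper's $E_m(x,z)$), the two families of weights coincide once $E_0=0$. What you add is an explicit derivation of the step the paper imports wholesale from Theorem 4.1 of \cite{BGavrea2018}, and an honest treatment of the boundary term $E_0\left[A_{1/(mn)}(f)-A_0(f)\right]$, which the paper suppresses by asserting $\mathbb{B}_m(e_0)=\mathbb{B}_m(e_1)=0$ without comment; that assertion is equivalent to $E_0=E_m'(x,1)=0$ and, as you observe, holds for the $\phi^n$-type families with $\partial_z\phi(\cdot,1)$ affine, but not for arbitrary generating functions.

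One point your own formulas expose and that you should state: once $E_0=\sum_i e_i=0$ is granted, the literal hypothesis $e_k\ge 0$ for all $k$ forces every $e_k=0$, hence $E_m(x,z)\equiv 0$ and $\mathbb{B}_m(f)=0$ — the theorem as printed is vacuous. (Test Bernstein with $m=2$, $n=1$: $E_2(x,z)=\frac{(x_1-x_2)^2}{4}(z-1)^2$, so $E_2(x,z)/(z-1)$ has negative constant coefficient and (\ref{eq:dkdzk}) fails, even though the conclusion holds and the concluding remarks claim Bernstein satisfies (\ref{eq:29}).) The hypothesis that is actually used — and the one Gusi\'c's identity (\ref{eq:31}) verifies for Bernstein — is nonnegativity of the Taylor coefficients of $E_m(x,z)/(z-1)^2$, i.e., of your weights $E_j$, $j\ge 1$. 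Your proof goes through verbatim under that strictly weaker condition, so the repair is simply to replace ``$e_k\ge 0$'' by ``$E_j\ge 0$'' as the standing hypothesis; as written you have proved a true but empty statement.
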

\begin{proof}
	We note that 
	$$
		\mathbb{B}_m(e_0) = \mathbb{B}_m(e_1) = 0. 
	$$
	On the other hand, we have
	$$
		\mathbb{B}_m(f) = L_{mn,A}(f)\left(\frac{x_1+....+x_m}{m}\right)-
		\sum_{k=0}^\infty \alpha_{n,k}(x) A_{\frac{k}{mn}}(f),
	$$
	where
	$$
		\alpha_{n,k}(x) = \sum_{i_1+...+i_m=k} a_{n,i_1}(x_1)...a_{n,i_m}(x_m).
	$$
         So
	 $$
		\mathbb{B}_m(f) =  \sum_{k=0}^\infty
		\left[ a_{mn,k}\left(\ \frac{x_1+...+x_m}{m} \right)
			-  \sum_{i_1+...+i_m=k} a_{n,i_1}(x_1)...a_{n,i_m}(x_m)\right]
		f\left(\frac{k}{m+n}\right).
	$$
	We note that
	\begin{eqnarray}
		g_{mn}\left(\ \frac{x_1+...+x_m}{m},z \right) - g_n(x_1,z)...g_n(x_m,z) = \nonumber\\
		\sum_{k=0}^\infty \left[ a_{mn,k}\left(\ \frac{x_1+...+x_m}{m} \right)
			-  \sum_{i_1+...+i_m=k} a_{n,i_1}(x_1)...a_{n,i_m}(x_m)\right] z^k.\label{eq:gmngn}
	\end{eqnarray}
	From (\ref{eq:gmngn}), we get
	\begin{eqnarray}
		a_{mn,k}\left( \frac{x_1+...+x_m}{m} \right)
			-  \sum_{i_1+...+i_m=k} a_{n,i_1}(x_1)...a_{n,i_m}(x_m) = \nonumber\\
			\frac{1}{2\pi}  \int_0^{2\pi} 
			\left[ g_{mn}\left(\frac{x_1+...+x_m}{m}, e^{i \theta}\right) - 
			g_n\left(x_1, e^{i\theta}\right)...g_n\left(x_m, e^{i\theta}\right)\right] e^{-ik\theta} d\theta,
			\label{eq:amn}
	\end{eqnarray}
	for any $k\in \mathbb{N}$. From (\ref{eq:amn}), by using the same technique as 
	in the proof of Theorem 4.1, \cite{BGavrea2018}, we get 
	\begin{equation}\label{eq:BmBernstein}
		\mathbb{B}_m(f) = \frac{2}{nm} \sum_{k=2}^\infty \mathbb{B}_m\left(\left|\cdot - \frac{k-1}{nm} \right|\right)
		\left[ \frac{k-2}{mn}, \frac{k-1}{mn},  \frac{k}{mn}; A_t(f) \right],
	\end{equation}
	where 
	\begin{equation}\label{eq:BmAbsValue}
   		\mathbb{B}_m\left(\left| \cdot - \frac{k-1}{mn}\right|\right) = \frac{1}{nm} \frac{1}{(k-2)!} 
	\frac{d^{k-2}}{dz^{k-2}} \left.
	 \frac{E^2_m(x,z)}{(z-1)^2} \right|_{z=0}
	\end{equation}
	and 
	\begin{equation}\label{eq:Emn}
		E_{m}(x,z) = g_{mn}\left(\frac{x_1+...+x_m}{m}, z\right) - g_n(x_1,z)...g_n(x_m,z).
	\end{equation}
	 Equations (\ref{eq:BmBernstein}),  (\ref{eq:BmAbsValue}) and (\ref{eq:4_25}) conclude our proof. 
\end{proof}
In what follows we are interested in whether there exists a large  class of linear positive operators for which 
$A(f) \ge 0$, whenever (\ref{eq:ineq_divdiff}) and (\ref{eq:ineq_derivs}) are satisfied and $\mathbb{B}_m(f)\ge 0$ 
or $\mathbb{B}_m(f)\le 0$.
\section*{Mastroianni type operators}
We denote by $C_2([0,\infty))$ the function space  
$$C_2([0,\infty)) := \left\{ f\in C([0,\infty)) \; : \; \exists  \lim_{x\to\infty} \frac{f(x)}{1+x^2} < \infty\right\}.$$
Let $\left( \varphi_n\right)_{n\in \mathbb{N}}$ be a sequence of real functions defined on $[0,\infty)$, 
$\varphi_n \in C^\infty[0,\infty)$, $n \in \mathbb{N}$ that are strictly monotone and satisfy the following conditions:
	\begin{eqnarray}
		\varphi_n(0) = 1, n \in \mathbb{N} \label{eq:24}\\
		(-1)^n \varphi_n^{(k)} (x) \ge 0, n \in \mathbb{N}^*, k \in \mathbb{N}, x\ge 0 \label{eq:25}\\
		\forall (n,k) \in \mathbb{N}\times \mathbb{N}, \exists \; p(n,k) \in \mathbb{N} \mbox{ and }\label{eq:26}\\
		\exists \alpha_{n,k}:[0,\infty) \to \mathbb{R} \mbox{ such that } \forall x\ge 0, \forall  i \in \mathbb{N}^*
		 \nonumber\\
		\varphi_{n}^{(i+k)} (x)  = (-1)^k \varphi_{p(n,k)}^{(i)} (x) \alpha_{n,k}(x) \mbox{ and }
		\lim_{n\to \infty} \frac{n}{p(n,k)} = \lim_{n\to \infty} \frac{\alpha_{n,k}(x)}{n^k} = 1\nonumber
	\end{eqnarray}
G. Mastroianni, in \cite{Mastroianni}, introduced for any  $n\in \mathbb{N}^*$, the operators
 $M_n:C_2([0,\infty))\to C([0,\infty))$, defined by 
$$
	M_n(f)(x) = \sum_{k=0}^\infty \frac{(-1)^k}{k!} x^k \varphi_n^{(k)}(x) f \left(\frac{k}{n}\right).
$$
Let $(A_t)_{t\in I}$ be a set of linear positive functionals defined on the linear set of functions 
$\mathcal{F}$, satisfying conditions i) and ii) above and such that for every $f\in \mathcal{F}$, the series 
	\begin{equation}\label{eq:27}
		M_{n,A}(f) (x) := \sum_{k=0}^\infty (-1)^k \frac{x^k \varphi_n^{(k)}(x)}{k !} A_{\frac{k}{n}}(f)
	\end{equation}
converges. We will assume that $\Pi_2\subseteq \mathcal{F}$. 

\noindent {\bf Remark.} If $\mathcal{F}= C_2([0,\infty))$, then $M_{n,A}(f)$ is well defined.
\begin{lemma}\label{lem:2_2}
	If for any $x\in[0,\infty)$, the function $g_n(x,\cdot)=\varphi(x(1-\cdot)($ is analytic
	 in  $D= \{z\in \mathbb{C} :\; |z|<R\}$,
	 $R>1$, then $g_n$ is a generating function for $M_{n,A}$.
\end{lemma}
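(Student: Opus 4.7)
The plan is to verify directly that the three defining properties in (\ref{eq:gn}) hold for $g_n(x,z) = \varphi_n(x(1-z))$, and then check that the Taylor coefficients of $g_n$ in $z$ coincide with the weights appearing in the definition (\ref{eq:27}) of $M_{n,A}$. Analyticity on $D$ is given by hypothesis, so the only real work is computing the power series expansion and confirming positivity of its coefficients.

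First I would expand $\varphi_n$ in Taylor series about the point $x$: since $x(1-z) - x = -xz$, one obtains formally
\begin{equation*}
   g_n(x,z) = \varphi_n(x(1-z)) = \sum_{k=0}^\infty \frac{\varphi_n^{(k)}(x)}{k!}(-xz)^k = \sum_{k=0}^\infty \frac{(-1)^k x^k \varphi_n^{(k)}(x)}{k!}\, z^k.
\end{equation*}
This identifies the coefficients as $a_{n,k}(x) = \dfrac{(-1)^k x^k \varphi_n^{(k)}(x)}{k!}$. Comparing with (\ref{eq:27}), this is exactly the kernel in the definition of $M_{n,A}$, so $M_{n,A}$ has the form (\ref{eq:2_3}) with these $a_{n,k}(x)$.

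Next I would verify the three conditions of (\ref{eq:gn}). The expansion above gives the first condition. For the second, condition (\ref{eq:25}) on the complete monotonicity of the $\varphi_n$ (the alternating-sign property of its derivatives) together with $x \geq 0$ yields $a_{n,k}(x) \geq 0$ for all $k$. For the normalization $g_n(x,1)=1$, I simply evaluate $\varphi_n(x(1-1)) = \varphi_n(0) = 1$ by (\ref{eq:24}).

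The argument is essentially a direct computation, so I do not expect a serious obstacle; the only subtle point is justifying the termwise expansion, which follows from the assumed analyticity of $\varphi_n(x(1-\cdot))$ on $D$ together with $R > 1$ (so $z = 1$ lies inside the disk of convergence, legitimizing the evaluation in the third condition). With these in place, $g_n$ satisfies all requirements of (\ref{eq:gn}) and generates $M_{n,A}$ relative to the functionals $\{A_t\}_{t \in I}$.
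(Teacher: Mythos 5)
Your proposal is correct and follows essentially the same route as the paper: both identify the Taylor coefficients of $\varphi_n(x(1-z))$ in $z$ as $a_{n,k}(x)=\frac{(-1)^k x^k\varphi_n^{(k)}(x)}{k!}$ (the paper via repeated differentiation in $z$, you via expanding $\varphi_n$ about $x$ with increment $-xz$ --- the same computation) and match them with the kernel in (\ref{eq:27}). You additionally spell out the nonnegativity of the coefficients from (\ref{eq:25}) and the normalization $g_n(x,1)=\varphi_n(0)=1$ from (\ref{eq:24}), which the paper leaves implicit; that is a harmless, indeed welcome, extra level of detail.
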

\begin{proof}
 	We have 
	$$
		\frac{d^k}{dz^k} g_n(x,z) = (-1)^k x^k \varphi_n^{(k)}(x(1-z)) 
	$$
	and therefore 
	$$
		g_n(x,z) = \sum_{k=0}^\infty (-1)^k x^k \frac{\varphi_n^{(k)}(x)}{k!} z^k.
	$$
\end{proof}
\begin{theorem}\label{th:2_3}
	Let $x,y\in [0,\infty)$, $ x\ne y$. If 
	$$
		\frac{g_n(x,z)-g_n(y,z)}{z-1} = \sum_{k=0}^\infty \beta_{n,k}(x,y) z^k,
	$$
	then $sgn \beta_{n,k}(x,y)$ is the same for all $k\in \mathbb{N}$.
\end{theorem}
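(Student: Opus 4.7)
My plan is to derive an integral representation of $\frac{g_n(x,z)-g_n(y,z)}{z-1}$, read off the coefficients $\beta_{n,k}(x,y)$ directly from it, and then use condition (\ref{eq:25}) to determine their sign.

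By Lemma \ref{lem:2_2}, $g_n(x,z)=\varphi_n(x(1-z))$. Setting $u=1-z$ (so $z-1=-u$) and applying the fundamental theorem of calculus to the auxiliary function $s\mapsto \varphi_n\bigl((x+(y-x)s)\,u\bigr)$ on $[0,1]$, one finds
\begin{equation*}
   \frac{g_n(x,z)-g_n(y,z)}{z-1}
   \;=\; (y-x)\int_0^1 \varphi_n'\bigl(w_s(1-z)\bigr)\,ds,
   \qquad w_s:=x+(y-x)s\in[0,\infty).
\end{equation*}

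Since $\frac{d^k}{dz^k}\varphi_n'(w_s(1-z))=(-w_s)^k\,\varphi_n^{(k+1)}(w_s(1-z))$, expanding the integrand in powers of $z$ gives
\begin{equation*}
  \varphi_n'\bigl(w_s(1-z)\bigr)
  =\sum_{k=0}^\infty \frac{(-w_s)^k}{k!}\,\varphi_n^{(k+1)}(w_s)\,z^k,
\end{equation*}
a series that converges on $D$ by the analyticity hypothesis on $g_n$. Exchanging sum and integral and identifying the coefficient of $z^k$ then yields
\begin{equation*}
  \beta_{n,k}(x,y) \;=\; (y-x)\int_0^1 \frac{(-w_s)^k}{k!}\,\varphi_n^{(k+1)}(w_s)\,ds.
\end{equation*}

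To conclude, I invoke condition (\ref{eq:25}), which (as already used in the construction of $M_n$) forces $\varphi_n^{(k+1)}(w_s)$ to carry the sign $(-1)^{k+1}$ for every $k\in\mathbb{N}$ and every $w_s\ge 0$. Therefore
\begin{equation*}
  (-w_s)^k\,\varphi_n^{(k+1)}(w_s)
  \;=\; -\,w_s^{\,k}\,\bigl|\varphi_n^{(k+1)}(w_s)\bigr|
  \;\le\; 0,
\end{equation*}
so the integral above is non-positive for \emph{every} $k$, and we obtain $\mathrm{sgn}\,\beta_{n,k}(x,y)=\mathrm{sgn}(x-y)$, a sign independent of $k$.

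I do not anticipate a substantive obstacle here; the whole argument is a short mean-value integral computation combined with the complete-monotonicity-type sign condition (\ref{eq:25}). The only delicate point is the boundary case $\min(x,y)=0$, where $w_s$ vanishes at one endpoint of $[0,1]$, but continuity of the integrand on $[0,1]$ disposes of this at once.
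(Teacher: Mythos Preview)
Your argument is correct and reaches the same conclusion as the paper, $\mathrm{sgn}\,\beta_{n,k}(x,y)=\mathrm{sgn}(x-y)$, but by a different route. The paper first expands $g_n(x,z)-g_n(y,z)$ as a power series in $z$ and multiplies by the geometric series for $-(1-z)^{-1}$ to obtain the closed partial-sum formula
\[
\beta_{n,k}(x,y)=h_{n,k}(x)-h_{n,k}(y),\qquad h_{n,k}(t)=-\sum_{p=0}^{k}\frac{(-1)^p t^p\varphi_n^{(p)}(t)}{p!},
\]
and then shows via a telescoping derivative computation that $h_{n,k}'(t)=\frac{(-1)^{k+1}t^{k}\varphi_n^{(k+1)}(t)}{k!}\ge 0$, so $h_{n,k}$ is nondecreasing. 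Your mean-value integral short-circuits both the Cauchy product and the telescoping step: after the substitution $t=w_s$ your formula reads $\beta_{n,k}(x,y)=\int_y^x h_{n,k}'(t)\,dt$, so you are in effect writing down the antiderivative relation directly. The paper's version has the minor advantage of producing an explicit finite-sum expression for $\beta_{n,k}$; your version is slicker and makes the monotonicity in the spatial variable visible from the first line.
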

\begin{proof}
	We have 
	$$
		\frac{g_n(x,z)-g_n(y,z)}{z-1} = - \sum_{p=0}^\infty 
		\frac{(-1)^p x^p \varphi_n^{(p)}(x) - (-1)^p y^p \varphi_n^{(p)}(y)}{p!} 
		\sum_{m=0}^\infty z^m.
	$$
	It follows that 
	\begin{equation}\label{eq:28}
		\beta_{n,k}(x,y) = - \sum_{p=0}^k 
		\frac{(-1)^p x^p \varphi_n^{(p)}(x) - (-1)^p y^p \varphi_n^{(p)}(y)}{p!} .
	\end{equation}
	Let us consider the function $h_{n,k}:[0,\infty) \to \mathbb{R}$ defined by 
	$$
		h_{n,k}(t) = - \sum_{p=0}^k \frac{(-1)^p t^p \varphi_n^{(p)}(t)}{p!}.
	$$
	We have 
	\begin{eqnarray*}
		h'_{n,k}(t) & =  & - \sum_{p=0}^k \frac{(-1)^p p t^{p-1} \varphi_n^{(p)}(t)}{p!}
					  - \sum_{p=0}^k \frac{(-1)^p t^p \varphi_n^{(p+1)}(t)}{p!}\\
				& = & \sum_{p=0}^{k-1} \frac{(-1)^p t^p \varphi_n^{(p+1)}(t)}{p!}
					- - \sum_{p=0}^k \frac{(-1)^p t^p \varphi_n^{(p+1)}(t)}{p!}\\
				& = & \frac{(-1)^{p+1} t^p \varphi_n^{(p+1)}(t)}{p!} \ge 0, \forall t \in [0,\infty),
				 \forall p\in \mathbb{N}.
	\end{eqnarray*}
	But 
	$$
		\beta_{n,k}(x,y) = h_{n,k}(x) - h_{n,k}(y)
	$$
 	and therefore 
	$$
		sgn \beta_{n,k}(x,y) = sgn(x-y), \forall x,y \in [0,\infty),
	$$
	which concludes our proof. 
\end{proof}
\begin{corollary}\label{cor:2_4}
  	Let $M_{n,A}$ be a family of Mastroianni type operators and let $f\in \mathcal{F}$. 
	If 
	$$
		\left[
			\frac{k}{n}, \frac{k+1}{n}, \frac{k+2}{n}; A_t(f)
		\right] \ge 0, \forall k \in \mathbb{N}, 
	$$
	then for all the functionals $C_m$, given by (\ref{eq:Cmf}), 
	with 
	$$
		a_{n,i_k}(x_i) = \frac{(-1)^{i_k} x_i^{i_k} \varphi_n^{(i_k)}(x_i)}{i_k!},
	$$
	we have $C_m(f) \ge 0$.
\end{corollary}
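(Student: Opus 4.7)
The proof amounts to chaining results already at our disposal, so the plan is to identify which earlier statements supply each required ingredient and then invoke the main theorem of \cite{BGavrea2018}. The divided difference inequality (\ref{eq:ineq_divdiff}) is given as a hypothesis, so the only nontrivial task is to verify the derivative condition (\ref{eq:ineq_derivs}); once both are in place, the conclusion $C_m(f)\ge 0$ follows from Theorem 4.1 of \cite{BGavrea2018} quoted in the introduction.

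First, I would recall Lemma \ref{lem:2_2}, which exhibits $g_n(x,z)=\varphi_n(x(1-z))$ as the generating function for $M_{n,A}$ with Taylor coefficients $a_{n,k}(x)=(-1)^k x^k\varphi_n^{(k)}(x)/k!$, matching exactly the expression for $a_{n,i_k}(x_i)$ used in the statement of the corollary. The standing hypotheses (\ref{eq:24}) and (\ref{eq:25}) on $\varphi_n$ guarantee nonnegativity of the coefficients and $g_n(x,1)=1$, so the structural conditions (\ref{eq:gn}) required of a generating function are satisfied.

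Next, I would apply Theorem \ref{th:2_3} just proved, which shows that the coefficients $\beta_{n,k}(x,y)$ appearing in the expansion (\ref{eq:4_25}) all share the same sign, namely $\mbox{sgn}(x-y)$, uniformly in $k$. This is precisely the hypothesis of the corollary from \cite{BGavrea2018} quoted immediately after (\ref{eq:4_25}), and that corollary then yields the derivative inequality (\ref{eq:ineq_derivs}) for every choice of $x,y\in [0,\infty)$.

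With both (\ref{eq:ineq_divdiff}) and (\ref{eq:ineq_derivs}) established, Theorem 4.1 of \cite{BGavrea2018} delivers $C_m(f)\ge 0$ for each $m\ge 2$, which is the desired conclusion. There is no real obstacle here; the proof is essentially a bookkeeping chain of citations. The only point deserving a brief mention is that the functionals $\{A_t\}_{t\in I}$ are assumed to satisfy the normalization conditions i) and ii), which is built into the setup introduced before (\ref{eq:27}), so the hypotheses of the cited theorems apply verbatim.
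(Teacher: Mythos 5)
Your proposal is correct and follows exactly the intended argument: the paper states Corollary \ref{cor:2_4} without an explicit proof, but the chain you describe --- Lemma \ref{lem:2_2} identifies the generating function and its coefficients, Theorem \ref{th:2_3} gives the constant sign of the $\beta_{n,k}(x,y)$, the quoted Corollary 3.2 of \cite{BGavrea2018} then yields (\ref{eq:ineq_derivs}), and Theorem 4.1 of \cite{BGavrea2018} concludes $C_m(f)\ge 0$ --- is precisely the deduction the paper intends. No gaps.
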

\section*{ Examples}
\begin{enumerate}
	\item[1.]{\bf  Bernstein type operators} are Mastroianni type operators with the functions 
	$\left(\varphi_n\right)_{n\in\mathbb{N}}$ defined by $\varphi_n(x) = (1-x)^n$ and the generating functions
	$g_n(x,t)$ given by 
	$$
		g_n(x,t) = (1-x +tx)^n.
	$$
	\item[2.] {\bf Mirakyan-Favard-Sz\'{a}sz type operators},  $S_{n,A}$, 
	$$
		S_{n,A}(f)(x) = e^{-nx} \sum_{k=0}^\infty \frac{(nx)^k}{k!} A_{\frac{k}{n}}(f)
	$$
	are obtained for 
	$
		\varphi_n(x) = e^{-nx}, x\ge 0 \mbox{ and } 
		g_n(x,z) = e^{-nx(1-z)}.
	$
	\item[3.]{\bf Baskakov type operators}, $V_{n,A}$, 
	$$
		V_{n,A}(f)(x) = (1+x)^{-n} \sum_{k=0}^\infty {n+k-1 \choose k} \left(\frac{x}{1+x}\right)^k 
		A_{\frac{k}{n}}(f) 
	$$
	are obtained for $\varphi_n(x) = (1+x)^{-n}$, $n \in \mathbb{N}^*$ and $g_n(x,z) = (1+x-xz)^{-n}$.
	\item[4.] {\bf Sz\'{a}sz-Schurer type operators}, $s_{n,p,A}$, 
	$$
		S_{n,p,A}(f)(x) = e^{-(n+p)x} \sum_{k=0}^\infty \frac{ (n+p)^k }{k!} A_{\frac{k}{n}}(f)
	$$
	are obtained for $\varphi_n(x) = e^{-(n+p)x}$ and $g_n(x,z) = e^{-(n+p)x(1-z)}$.
\end{enumerate}
We note that in the above examples the generating functions are of the following form:
$$
	g_n(x,z) = g_1^n(x,z),
$$
where $g_1(x,z)$ is the generating function for the operator $L_{1,A}$. In these case $E_m(x,z)$ given by 
(\ref{eq:Emn}) can be written in the following form 
$$
	E_m(x,z) = g_1^{nm} \left(\frac{x_1+...+x_m}{m},z\right) - \left( g_1(x_1,z)...g_1(x_m,z)\right)^n 
$$
and by using Theorem \ref{th:main1}, we get the following result
\begin{theorem}\label{th:main2}
	Let $f\in \mathcal{F}$ be a function with the property that 
	$$
		\left[\frac{k}{n}, \frac{k+1}{n}, \frac{k+2}{n}; A_t(f)\right]\ge 0, \forall k\in \mathbb{N}.
	$$
	If 
	\begin{equation}\label{eq:29}
		\frac{d^k}{dz^k} \left. \left[\frac{g_{1}^m\left(\frac{x_1+...+x_m}{m},z\right)-
        		 g_1(x_1,z)...g_1(x_m,z)}{z-1} \right]\right|_{z=0} \ge 0
  	 \end{equation}
	 for all $k\in \mathbb{N}$ and all $x=(x_1,...,x_m)\in I ^m$, 
	then $\mathbb{B}_m(f)\ge 0$. If  the reverse of  inequality (\ref{eq:29}) is satisfied
	for all $k\in \mathbb{N}$ and all $x=(x_1,...,x_m)\in I ^m$, then 
	$\mathbb{B}_m(f)\le 0$.
\end{theorem}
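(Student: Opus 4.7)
The plan is to reduce Theorem \ref{th:main2} to Theorem \ref{th:main1}. Set $A(z):=g_1^m\bigl(\tfrac{x_1+\cdots+x_m}{m},z\bigr)$ and $B(z):=g_1(x_1,z)\cdots g_1(x_m,z)$. Since $g_n=g_1^n$ we have $g_{mn}=g_1^{mn}$, so the function $E_m(x,z)$ appearing in (\ref{eq:Emn}) equals $A(z)^n - B(z)^n$. Consequently, to invoke Theorem \ref{th:main1} it suffices to show that the Taylor coefficients at $z=0$ of $(A^n-B^n)/(z-1)$ are nonnegative under the hypothesis (\ref{eq:29}). Note that $g_1(x,1)=1$ by (\ref{eq:gn}), so $A(1)=B(1)=1$ and $(A-B)/(z-1)$ is analytic at $z=1$; in particular it and its $n$th-power analogue are genuine power series.

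The main algebraic step is the identity
\begin{equation*}
\frac{A^{n}-B^{n}}{z-1}\;=\;\frac{A-B}{z-1}\cdot\sum_{j=0}^{n-1}A^{\,j}B^{\,n-1-j}.
\end{equation*}
The first factor on the right has nonnegative Taylor coefficients at $z=0$ by hypothesis (\ref{eq:29}). The second factor has nonnegative Taylor coefficients as well: by (\ref{eq:gn}), $g_1(x,z)$ is a power series in $z$ with nonnegative coefficients, and products, powers, and finite sums of such series inherit this property. Since the Cauchy product of two power series with nonnegative Taylor coefficients again has nonnegative coefficients, the left side has nonnegative Taylor coefficients at $z=0$. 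This is precisely the hypothesis (\ref{eq:dkdzk}) of Theorem \ref{th:main1}, whence $\mathbb{B}_m(f)\ge 0$.

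The reverse inequality follows by the same argument: if the direction of (\ref{eq:29}) is reversed then $(A-B)/(z-1)$ has nonpositive Taylor coefficients while $\sum_j A^{\,j}B^{\,n-1-j}$ still has nonnegative ones, so their Cauchy product has nonpositive coefficients and the second half of Theorem \ref{th:main1} gives $\mathbb{B}_m(f)\le 0$. I anticipate no serious obstacle: the argument is essentially the factorization $A^{n}-B^{n}=(A-B)\sum_{j}A^{\,j}B^{\,n-1-j}$ combined with the stability of the cone of power series with nonnegative Taylor coefficients under addition and multiplication, after which Theorem \ref{th:main1} does all the analytic work.
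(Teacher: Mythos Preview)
Your proposal is correct and follows the same route as the paper: reduce to Theorem~\ref{th:main1} by observing that, when $g_n=g_1^n$, the function $E_m(x,z)$ in (\ref{eq:Emn}) equals $A^n-B^n$ with $A=g_1^m(\tfrac{x_1+\cdots+x_m}{m},z)$ and $B=\prod_\nu g_1(x_\nu,z)$. In fact you supply more detail than the paper, which simply writes $E_m=A^n-B^n$ and then says ``by using Theorem~\ref{th:main1}, we get the following result''; your factorization $A^n-B^n=(A-B)\sum_{j=0}^{n-1}A^jB^{\,n-1-j}$ together with the nonnegativity of the Taylor coefficients of $A$ and $B$ is exactly the bridge needed to pass from hypothesis~(\ref{eq:29}) to hypothesis~(\ref{eq:dkdzk}).
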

\section*{Concluding remarks}
 We mention below a few consequences of Theorem \ref{th:main2}.
\begin{enumerate}
	\item The Bernstein type operators verify (\ref{eq:29}). In this case $g_n(x,z) = 1-x +zx$ and 
	inequality (\ref{eq:29}) follows from Gusi\'c, \cite{Gusic}, Theorem 1 (see also \cite{Tarartkova}, Equation (2)), 
	where the following representation is given
	\begin{equation}\label{eq:31}
		\left(\sum_{\nu= 1}^m a_\nu\right)^m - m^m \sum_{\nu=1}^m a_\nu
		= \sum_{1\le i <j \le m} (a_i-a_j)^2 P_{i,j}(a_1,...,a_m).
	\end{equation}
	In (\ref{eq:31}), $P_{i,j}$ are some homogeneous polynomials of degree $n-2$ with non-negative coefficients.
	Identity (\ref{eq:31}) was used by Abel and Ra\c{s}a in \cite{AbelRasa2018} for the classical Bernstein operators. 
	\item For $g_1(x,z) = e^{-x(1-z)}$, we get 
	$$
		\mathbb{B}_m(f) = C_m(f), \; m\in \mathbb{N}^*. 
	$$
	\item  In the case of Baskakov type operators, we have
	$$
		g_1(x,z) = \frac{1}{1+x-xz}.
	$$
	Using now (\ref{eq:31}), it follows that the reverse of inequality (\ref{eq:29}) is satisfied. Therefore, if $f \in 
	\mathcal{F}$ and 
	$$
		\left[
			\frac{k}{n}, \frac{k+1}{n}, \frac{k+2}{n}; A_t(f)
		\right] \ge 0, \forall k \in \mathbb{N},
	$$
	then the Baskakov type operatotrs satisfy the  following inequalities
	\begin{equation*}
		V_{n,A}(f) \left(\frac{x_1+...+x_m}{m}\right) \le \sum_{i_1=0}^\infty...\sum_{i_m=0}^\infty
		\prod_{\nu=1}^m a_{n,i_\nu}(x_\nu) A_{\sum_{\nu=1}^m i_\nu/m }
	\end{equation*}
	and 
	\begin{eqnarray*}
		\sum_{i_1,...,i_m=0}^\infty \left[ a_{n,i_1}(x_1)..a_{n,i_m}(x_1).+....+a_{n,i_1}(x_1m..a_{n,i_m}(x_m)\right]
		A_{\frac{i_1+...+i_m}{nm}}(f)\\
		\ge m \sum_{i_1,...,i_m=0}^\infty a_{n,i_1}(x_1)....a_{n,i_m}(x_m)A_{\frac{i_1+...+i_m}{nm}}(f)
	\end{eqnarray*}
\end{enumerate}


\section*{References}

\end{document}